\documentclass[a4paper,10pt]{amsart}
\usepackage[cp1250]{inputenc}
\usepackage[T1]{fontenc} 

\usepackage{times}
\usepackage{amssymb}

\theoremstyle{plain}
\newtheorem{lemma}{Lemma}[section]
\newtheorem{theorem}[lemma]{Theorem}
\newtheorem{corollary}[lemma]{Corollary}

\theoremstyle{remark}
\newtheorem{remark}[lemma]{Remark}

\usepackage{graphicx}
\usepackage{amsmath}
\usepackage{amssymb}
\usepackage{mathrsfs}
\usepackage{stmaryrd}
\usepackage{eucal}
\usepackage{yfonts}
 
\title{Topological expansion of the coefficients of zonal polynomials in genus one.}
\author{Agnieszka Czyżewska-Jankowska}
 
\thanks{The author was supported by a scholarship of European Social Fund}
\thanks{Research partialy supported by Polish Government funds for science, grant N N201 364436 for
the years 2009–2012 }
\begin{document}
\maketitle



\begin{abstract}
We use a combinatorial interpretation of the coefficients of zonal Kerov polynomials as a number of unoriented maps to derive an explicit formula for the coefficients in genus one.
\end{abstract}

\section{Introduction}
Zonal polynomials, introduced by Hua [Hua63, Chapter VI] as a tool in statistics and multivariate analysis quickly became a fundamental tool in this theory and in the random matrix trheory. For an overwiev of the topic we refer to the book of Muirhead [Mui82].

The starting point of this paper is the combinatorial interpretation of coefficients of zonal polynomials (Theorem 1) [FŚ11b]. We will use this interpretation to find an explicit formula for some coefficients, namely coefficients in genus one. 

In order to do that we will study unoriented maps. An unoriented map (or simply a map) is a labeled graph drawn on a surface (compact, 2-dimentional manifold) minimal in the sense that after removing a map we get a collection of open discs. By labeling we mean labeling each side of each of the $n$ edges with different number from $1$ to $2n$.
For full introduction to the topic we adress to the paper of Feray and Śniady [FŚ11b]. In this article we will only consider maps which have one face. Such maps with n edges can be obtained by gluing by pair the edges of a polygon with $2n$ edges. 
More information about it can be found in [FŚ11b].

\begin{theorem}
Consider an edge-labelled polygon of length $2n$. 

Let $s_2,s_3,\dots$ be a sequence of non-negative
integers with only finitely many non-zero elements.
The rescaled coefficient
$$  (-1)^{n+1+2s_2+3s_3+\cdots} (2)^{n-1 - (2s_2+3s_3+\cdots)}
\left[ \left(R_2^{(2)}\right)^{s_2}
\left(R_3^{(2)}\right)^{s_3} \cdots\right]
Z_n $$ of the 
zonal Kerov polynomial is equal to the number of pairs $(M,q)$
such that 
\begin{itemize}
\item $M$ is a connected map obtained by gluing edges of
		our polygon by pair
\item the pair $(G(M),q)$, where $G(M)$ is the underlying graph of $M$,
   fulfill the following conditions:
\begin{itemize}
\item[(a)] the number of the black vertices of $G$ is
equal to $s_2+s_3+\cdots$ 
\item[(b)] the total number of vertices of $G$ is equal to $2 s_2+3 s_3+4 s_4+\cdots$
\item[(c)] $q$ is a function from the set of
the black vertices to the set $\{2,3,\dots\}$; we require that each number
$i\in\{2,3,\dots\}$ is used exactly $s_i$ times;
\item[(d)] \label{enum:marriage_Graphs} for every subset $A\subset V_\circ(G)$ of black vertices of $G$
which is nontrivial (i.e., $A\neq\emptyset$ and $A\neq V_\circ(G)$) there are
more than $\sum_{v\in A} \big( q(v)-1 \big)$ white vertices which are
connected to at least one vertex from $A$

\end{itemize}

\end{itemize}
\end{theorem}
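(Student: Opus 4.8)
As remarked above, this statement is precisely the combinatorial interpretation of the coefficients of the zonal Kerov polynomial established in the reference cited above, so strictly speaking it suffices to invoke that source; I nevertheless sketch the route one would take to prove it directly, since the later sections lean on understanding where each ingredient comes from. The plan is to begin from the definition of the zonal Kerov polynomial $Z_n$: it is the polynomial expressing the normalized zonal character attached to the Gelfand pair $(S_{2n},H_n)$, with $H_n$ the hyperoctahedral group, in terms of the zonal free cumulants $R_2^{(2)},R_3^{(2)},\dots$. The first step is to replace the character side by a dual combinatorial object: the Stanley--F\'eray type expansion of zonal characters writes it as a weighted sum over pairs of pair-partitions, equivalently over the unoriented maps obtained by gluing the edges of the $2n$-gon in pairs.

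The second step is a change of basis: the free cumulants $R_i^{(2)}$ are obtained from the moments of the associated (deformed) transition measure by a Lagrange-type inversion, so reading off the coefficient of a prescribed monomial $\left(R_2^{(2)}\right)^{s_2}\left(R_3^{(2)}\right)^{s_3}\cdots$ amounts to a M\"obius inversion over the lattice of set partitions of the vertex set of each map. Performing this inversion produces a \emph{signed} sum indexed by the maps together with the extra data of a set partition of a subset of the vertices and the colouring $q$; conditions (a), (b) and (c) merely record which maps, which vertex counts and which functions $q$ are allowed to contribute.

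The heart of the matter, and the step I expect to be the main obstacle, is to show that after the M\"obius inversion the signed sum collapses to the plain unsigned enumeration of the pairs $(M,q)$ satisfying the marriage condition (d). Here one constructs a sign-reversing involution, or equivalently invokes Hall's marriage theorem: the contributions of all configurations for which some nontrivial set $A$ of black vertices is ``over-demanded'' --- i.e. has at most $\sum_{v\in A}\big(q(v)-1\big)$ adjacent white vertices --- must be shown to cancel in pairs, so that exactly the maps meeting (d) survive, each counted with coefficient $+1$. Finally the normalization prefactor $(-1)^{n+1+2s_2+3s_3+\cdots}\,(2)^{n-1-(2s_2+3s_3+\cdots)}$ is pure bookkeeping: the power of $2$ is the accumulated Jack parameter $\alpha=2$ carried by the vertices and faces of an unoriented map, while the sign and the exponent are read off from Euler's relation applied to a one-face map whose vertex count is pinned down by (b), so that its genus is determined by $s_2,s_3,\dots$.
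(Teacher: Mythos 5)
The paper does not prove this theorem itself: it is quoted from F\'eray--\'Sniady (the reference [F\'S11b]), and your proposal likewise reduces to invoking that source, so it matches the paper's treatment exactly. Your accompanying outline (Stanley-type expansion of zonal characters over pairings of the $2n$-gon's edges, extraction of the free-cumulant coefficients, cancellation leaving only the pairs $(M,q)$ satisfying the marriage condition, and Euler's relation accounting for the sign and the power of $2$) is consistent with the strategy of that reference and of [DF\'S10], though it remains a sketch rather than a substitute for the cited proof.
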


\begin{remark}
It follows directly from the Euler equation (Euler characteristic) that
for a given coefficient of $Z_n$, all the maps from Theorem 1
have the same genus. Thus it makes sense to define the genus of a 
coefficient. Note that it also follows directly from Euler equation that
$2 s_2+3 s_3+4 s_4+\cdots=n-1$ for coefficients in genus one. 
\end{remark}

The following theorem comes from the paper [DFŚ10, Prop. A1].

\begin{theorem}
Condition (d) of Theorem 1 is equivalent
to the following one:
\begin{enumerate}
\item 
it is possible to chose orientations on the edges of the bipartite graph
$G$ in such a way that: 
\begin{itemize}
\item every white vertex has exactly one
outgoing edge and every black vertex $j$ has exactly $q(j)-1$
incoming edges, \item if we interpret orientations of edges as directions of
one-way streets, there exists a closed walk in the graph such that every black
vertex is visited at least once. 
\end{itemize}
\end{enumerate}
\end{theorem}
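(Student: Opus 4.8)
The plan is to prove the two implications of the equivalence separately, after rephrasing condition (1) in a more workable form. Write $B := V_\circ(G)$ for the set of black vertices and $W$ for the set of white vertices; recall that conditions (a)--(c) force $\sum_{v \in B}(q(v)-1) = |W|$, and that $G$ is connected and bipartite. (The degenerate cases $|B| \le 1$, where condition (d) is vacuous, will be dealt with separately.) First observe that an orientation of $G$ in which every white vertex has out-degree $1$ and every black vertex $v$ has in-degree $q(v)-1$ is the same datum as a function $f \colon W \to B$ with $f(u)$ a neighbour of $u$ and $|f^{-1}(v)| = q(v)-1$ for every $v \in B$: orient $uf(u)$ out of $u$ and every other edge at $u$ into $u$. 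In the corresponding digraph, a closed directed walk visiting every black vertex exists iff all black vertices lie in a single strongly connected component, which in turn holds iff the auxiliary digraph $\mathcal D$ on the vertex set $B$ --- with an arc $v \to v'$ whenever $v \ne v'$ and $f(u) = v'$ for some white neighbour $u$ of $v$ --- is strongly connected, since a directed walk between black vertices projects (after deleting the white vertices) to a directed walk in $\mathcal D$ and, conversely, each arc of $\mathcal D$ lifts to a length-two directed walk. Thus (1) becomes: \emph{there is such an $f$ for which $\mathcal D$ is strongly connected.}

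For $(1)\Rightarrow(d)$: fix a valid $f$ and a closed walk $\gamma$ through all black vertices, and let $A \subseteq B$ be nonempty and proper. The set $W_A := \{u \in W : f(u) \in A\}$ is in bijection with the edges of $G$ entering $A$ (each is the unique outgoing edge of its white endpoint), so $|W_A| = \sum_{v \in A}(q(v)-1)$, and $W_A \subseteq N_G(A)$. Since $\gamma$ meets both $A$ and $B \setminus A$, it contains a segment $v \to u \to v'$ with $v \in A$, $v' \notin A$, $u$ white; the edge of $\gamma$ leaving $u$ is $u$'s unique out-edge, so $f(u) = v' \notin A$, i.e.\ $u \in N_G(A) \setminus W_A$. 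Hence $|N_G(A)| \ge |W_A| + 1 > \sum_{v \in A}(q(v)-1)$, which is (d).

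For $(d)\Rightarrow(1)$ I will show that \emph{every} valid $f$ makes $\mathcal D$ strongly connected, once we know a valid $f$ exists at all. Existence: applying Hall's theorem to the bipartite graph $\widetilde G$ obtained from $G$ by replacing each $v \in B$ with $q(v)-1$ copies having the same white neighbours as $v$, a valid $f$ exists iff $\widetilde G$ has a perfect matching, i.e.\ iff $\sum_{v \in N_G(S)}(q(v)-1) \ge |S|$ for every $S \subseteq W$; given $S$, set $A := B \setminus N_G(S)$, note $N_G(A) \subseteq W \setminus S$, and when $A$ is nonempty and proper apply (d) to obtain $\sum_{v \in A}(q(v)-1) < |N_G(A)| \le |W| - |S|$, whence $\sum_{v \in N_G(S)}(q(v)-1) = |W| - \sum_{v \in A}(q(v)-1) > |S|$ (the cases $A = \emptyset$ and $A = B$ being immediate). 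Strong connectivity: suppose not; as $|B| \ge 2$, the condensation of $\mathcal D$ has a source strongly connected component $\mathcal C$ with $\emptyset \ne \mathcal C \subsetneq B$. Since no arc of $\mathcal D$ enters $\mathcal C$ from outside, $f^{-1}(v') \cap N_G(B \setminus \mathcal C) = \emptyset$ for every $v' \in \mathcal C$, so summing the disjoint fibres gives $\sum_{v' \in \mathcal C}(q(v')-1) \le |W| - |N_G(B \setminus \mathcal C)|$; but $B \setminus \mathcal C$ is nonempty and proper, so (d) gives $|N_G(B \setminus \mathcal C)| > \sum_{v \in B \setminus \mathcal C}(q(v)-1) = |W| - \sum_{v' \in \mathcal C}(q(v')-1)$, and the two inequalities contradict each other. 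Finally, lifting a closed walk of $\mathcal D$ through all of $B$ arc by arc produces a closed directed walk in the oriented $G$ visiting every black vertex, establishing (1).

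I expect the strong-connectivity step in $(d)\Rightarrow(1)$ to be the crux: producing \emph{some} valid orientation only uses Hall's condition with $\ge$, so the work lies in seeing that the \emph{strict} inequalities in (d) are exactly what is needed --- and what suffices --- to merge all black vertices into a single strongly connected component. The bookkeeping equating ``closed walk through every black vertex'' with ``strong connectivity of $\mathcal D$'', together with the degenerate cases $|B| \le 1$, is routine but should be written out with some care.
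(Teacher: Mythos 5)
Your proof appears to be correct, but note that the paper itself contains no proof of this statement at all: it is imported verbatim from the cited reference ([DF\'{S}10, Prop.~A1]), so there is no internal argument to compare yours with. What you give is a self-contained derivation along the standard lines: encode the admissible orientation as a function $f\colon W\to B$ with $|f^{-1}(v)|=q(v)-1$, get existence of some $f$ from Hall's theorem applied to the graph with each black vertex split into $q(v)-1$ copies (here only the weak inequalities of (d) are needed), and then show that the strict inequalities force the auxiliary digraph $\mathcal{D}$ on the black vertices to be strongly connected, via a source component of the condensation; the converse implication is read off directly from a closed walk crossing from $A$ to its complement. All of these steps check out, including the counting identity $\sum_{v}(q(v)-1)=|W|$ coming from conditions (a)--(c). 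Two loose ends you should tie up in a written version: (i) the underlying graph of a map may have multiple edges, so an orientation is not literally ``the same datum'' as $f$ (parallel edges give several choices of out-edge at a white vertex); this affects none of your estimates, since the in-degree of a black vertex still equals $|f^{-1}(v)|$ and arcs of $\mathcal{D}$ still lift to length-two directed walks, but it deserves a sentence. (ii) You defer the case $|B|=1$, where (d) is vacuous; there condition (1) holds only under the convention that the length-zero walk at the unique black vertex counts as a closed walk visiting it (for a star map with all white vertices of degree one the oriented graph has no directed cycle through the black vertex), so that convention should be stated explicitly rather than dismissed as routine.
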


\section{Results}

Our purpose now is to find a formula for an arbitrary coefficient  of  $R_{a_1}...R_{a_k}$ in $Z_n$ in genus 1.
Our method is to use Theorem 1 so we will need to know the number of
unoriented maps in genus 1 fulfilling some additional conditions, for example they must have one face and they cannot have any disconnecting edges.

We will first show some simple maps in genus 1 which have the minimal possible number of edges and are not yet bipartite graphs. If we ignore the labeling of the edges, we can think that there are 5 basic types of such unoriented maps in genus 1. These maps are shown on figure 1.

\begin{figure}[tb]
\includegraphics[width=100mm]{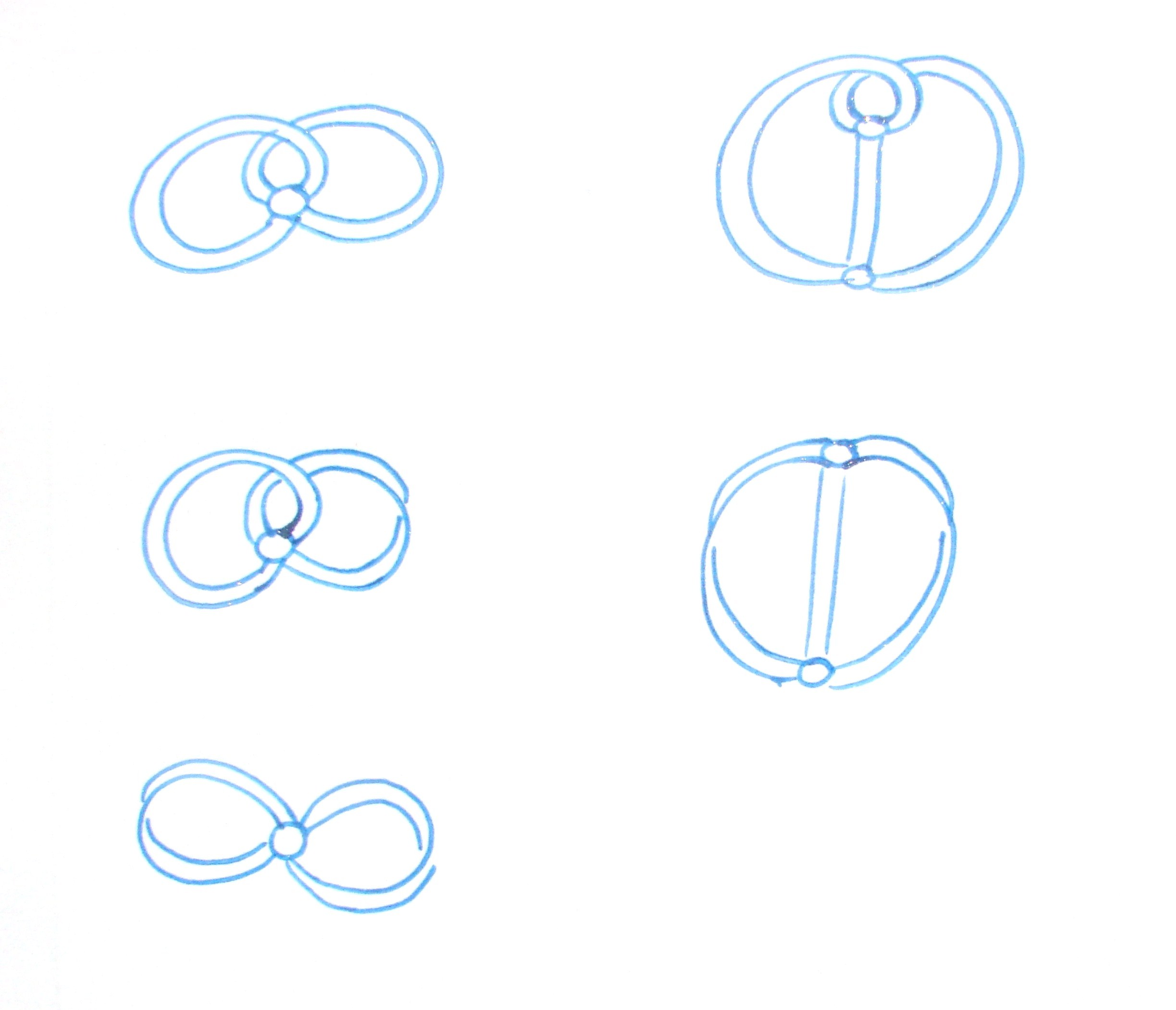}
\caption{Reduced maps in genus 1 with one face without disconnecting edges.}
\end{figure}

Such objects will be called unoriented maps. Note that these basic (reduced) maps do not have vertices of order 1 or 2.
Let us now produce more complicated maps out of the basic ones in the following steps: 
\begin{enumerate}
\item[1.] 
We color existing vertices of the reduced map black and white and add the necessary new vertices of order two to make our map a bipartite graph. We also 
have to think of all possible colorings $q$ of the set of black vertices of our map M.


\item[2.] We add new pairs of black and white vertices on the existing edges. The new vertices will then have color 2.
\item[3.] Around the existing black vertices we add white vertices of order 1. 
We have to think how it changes the coloring $q$ of black vertices.
\item[4.] 
We mark one edge of the map M.
\end{enumerate}

\textbf{1.Bipartite reduced maps with one face, without disconnecting edges. }
First we will see in how many ways we can color black and white the vertices of reduced maps to obtain a bipartite graph.
Most of them need some additional vertices to become a bipartite graph.
Figure 2 shows all (up to the labeling) bipartite reduced maps that we are interested in.
\begin{figure}[tb]
\includegraphics[width=100mm]{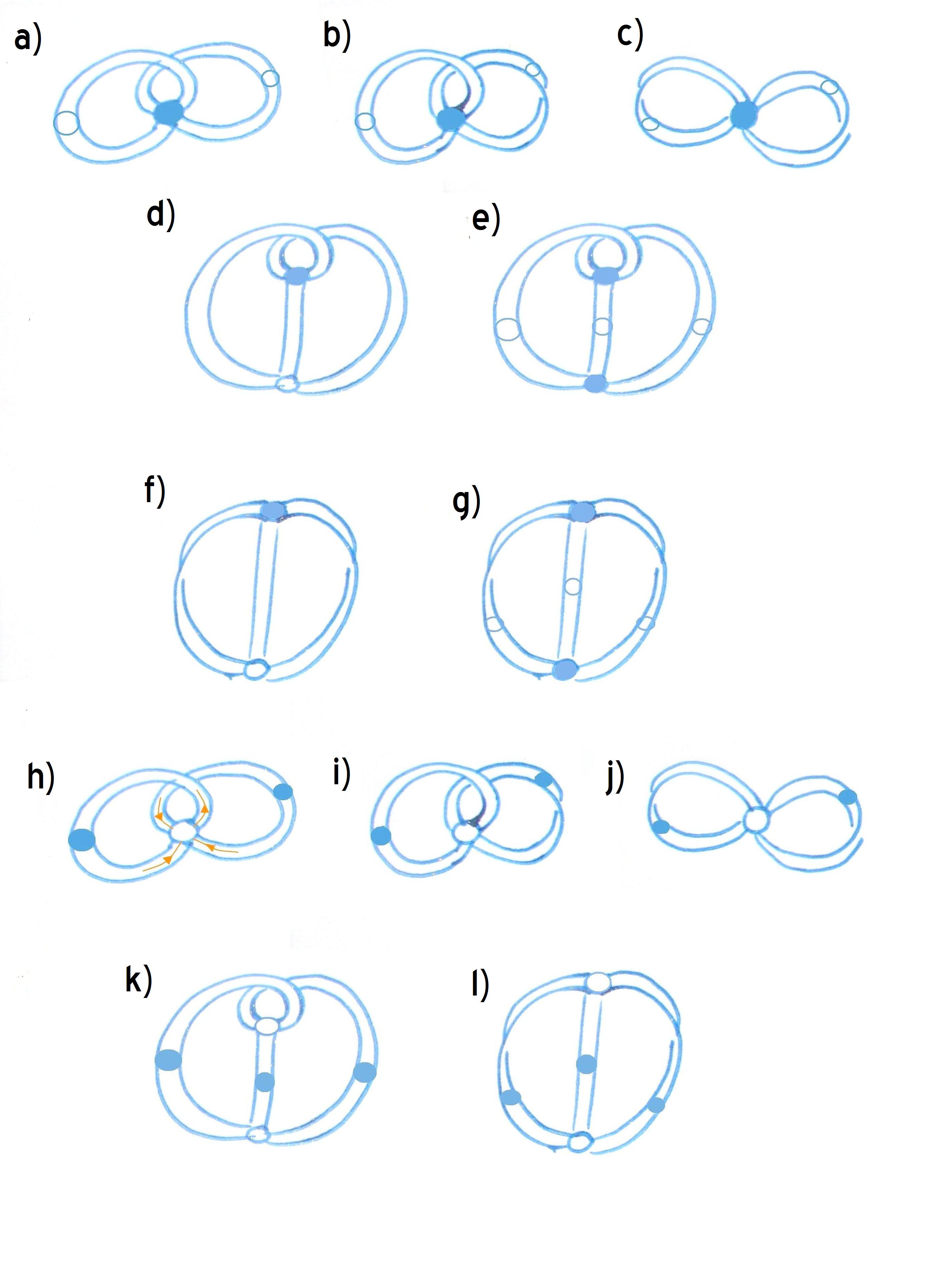}
\caption{Reduced bipartite maps in genus 1}
\end{figure}
\begin{remark}
Only cases a)-g) and maps which can be produced out of them by the above-described procedure have contribution to the coefficients in genus 1.
It comes from the fact that only for these maps there exists a coloring q of the set of black vertices fulfilling the condition d) of Theorem 1.
\end{remark}

\textbf{2.Maps obtained from reduced bipartite maps}\\ 
Let us now deal with more complicated objects produced out of the basic ones.
Any non-reduced map fulfilling the conditions of Theorem 1 can be obtained 
from the reduced map by adding pairs of black and white vertices of order 2 on the existing edges and adding white vertices of order 1 to the existing black vertices of the map M.
Note that we do not need to consider maps which have black vertices of order 1 as they do not fulfill the condition d) of Theorem 1. An example of such a more complicated map with the coloring of black vertices obtained from the case g) is shown on figure 3.
\begin{figure}[tb]
\includegraphics[width=100mm]{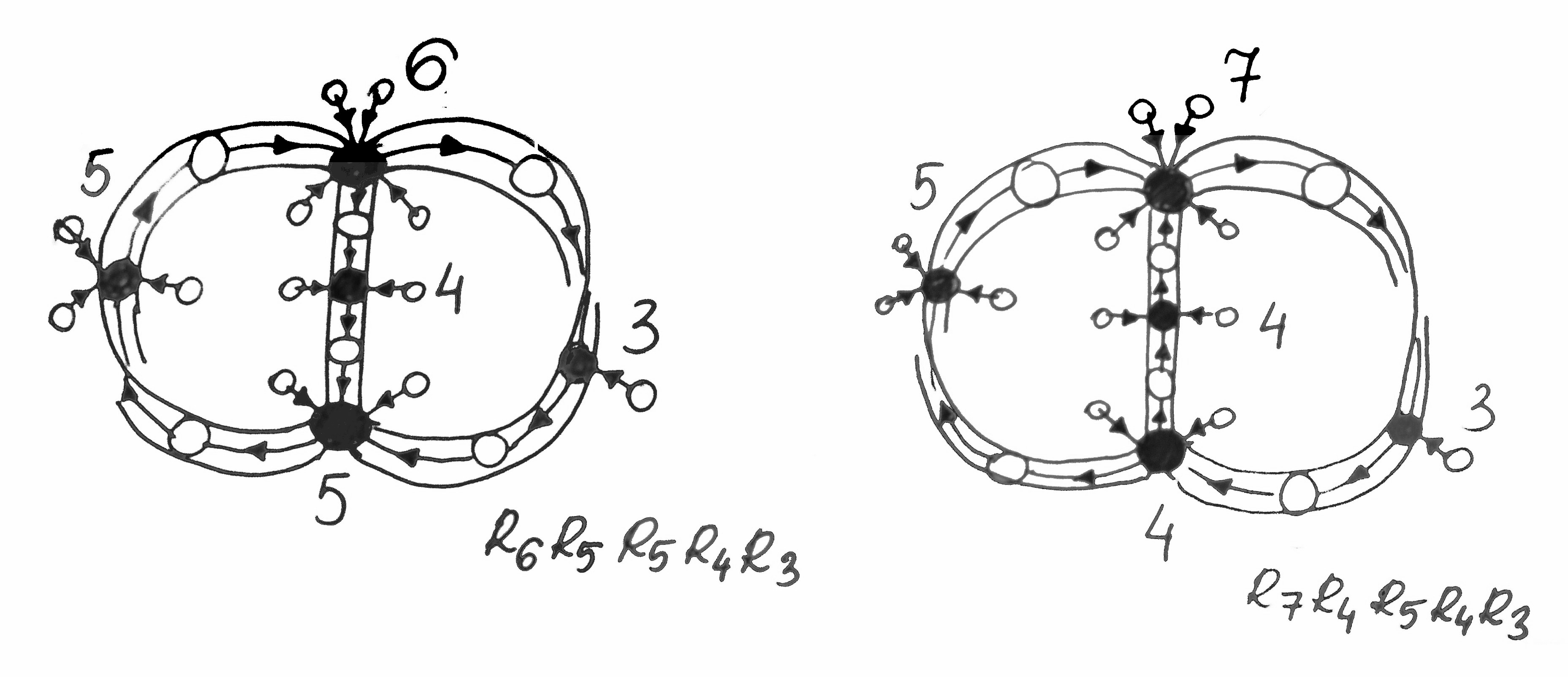}
\caption{Example of a map with the coloring of black vertices.}
\end{figure}

Let us now count the number of ways of adding new vertices of order 1 and 2 to the reduced bipartite map.
\begin{remark}
Let M be a reduced bipartite map with $m$ edges and
let $k$ be the number of black vertices. Then there are
$$
\left({k+m-1}\atop{m-1}\right)=\frac{(k+1)\cdots(k+m-1)}{(m-1)!}
$$
ways of puting $k$ black vertices on $m$ edges of the map M.
The number of ways of adding $k$ white vertices of order 1 to every black vertex of order $m$ is the same.
\end{remark}  

\textbf{3.The coloring of black vertices. }
Let us focus on the number of all possible colorings $q$ of black vertices
of reduced bipartite map and the way this coloring changes when more complicated maps are produced. We start with the most general case and the other cases will
be discussed only briefly. 
Let us take a look at the map g) from figure 2 and see what are the possible colorings $q$ of the black vertices depending on the choice of directions of moving along the edges. As the map g) must have a closed path joining all the black vertices, there are only two possibilities shown on figure 4.
\begin{figure}[tb]
\includegraphics[width=100mm]{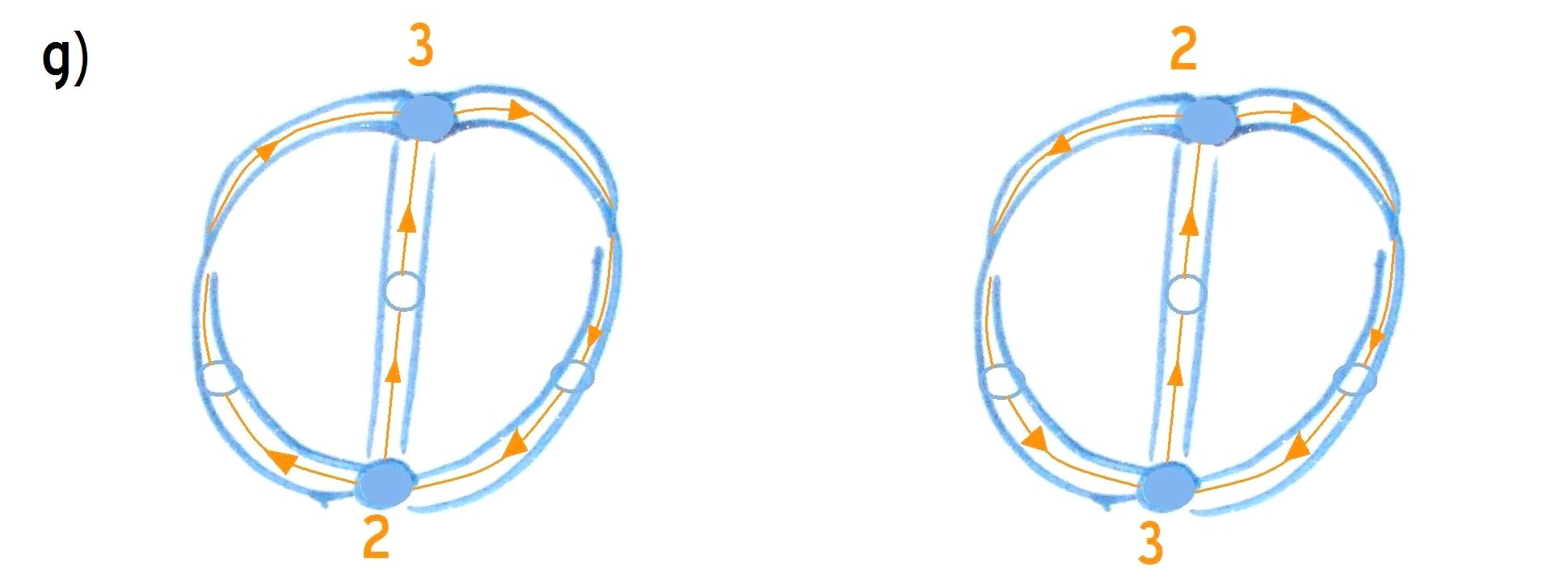}
\caption{Two colorings of a bipartite map from Figure 2 g). }
\end{figure}

Let us see how the coloring of the black vertices of map g) changes when 
we add some new vertices. Let us first add pairs of white and black vertices
of order 2. Note that all the vertices of order 2 must have color 2.


Let us now enumerate black vertices of map M, starting with two vertices of order 3 and enumerate the other vertices along the edges.
Let us then add  $a_1-3$ white vertices of order 1 to the first vertex, $a_2-2$ to the second and $a_i-2$ to the $i$-th vertex of order 2.
The $i$-th black vertex has then color $a_i$ and the resulting map has
contribution to the coefficient of $R_{a_1}...R_{a_k}$.


\begin{figure}[tb]
\includegraphics[width=70mm]{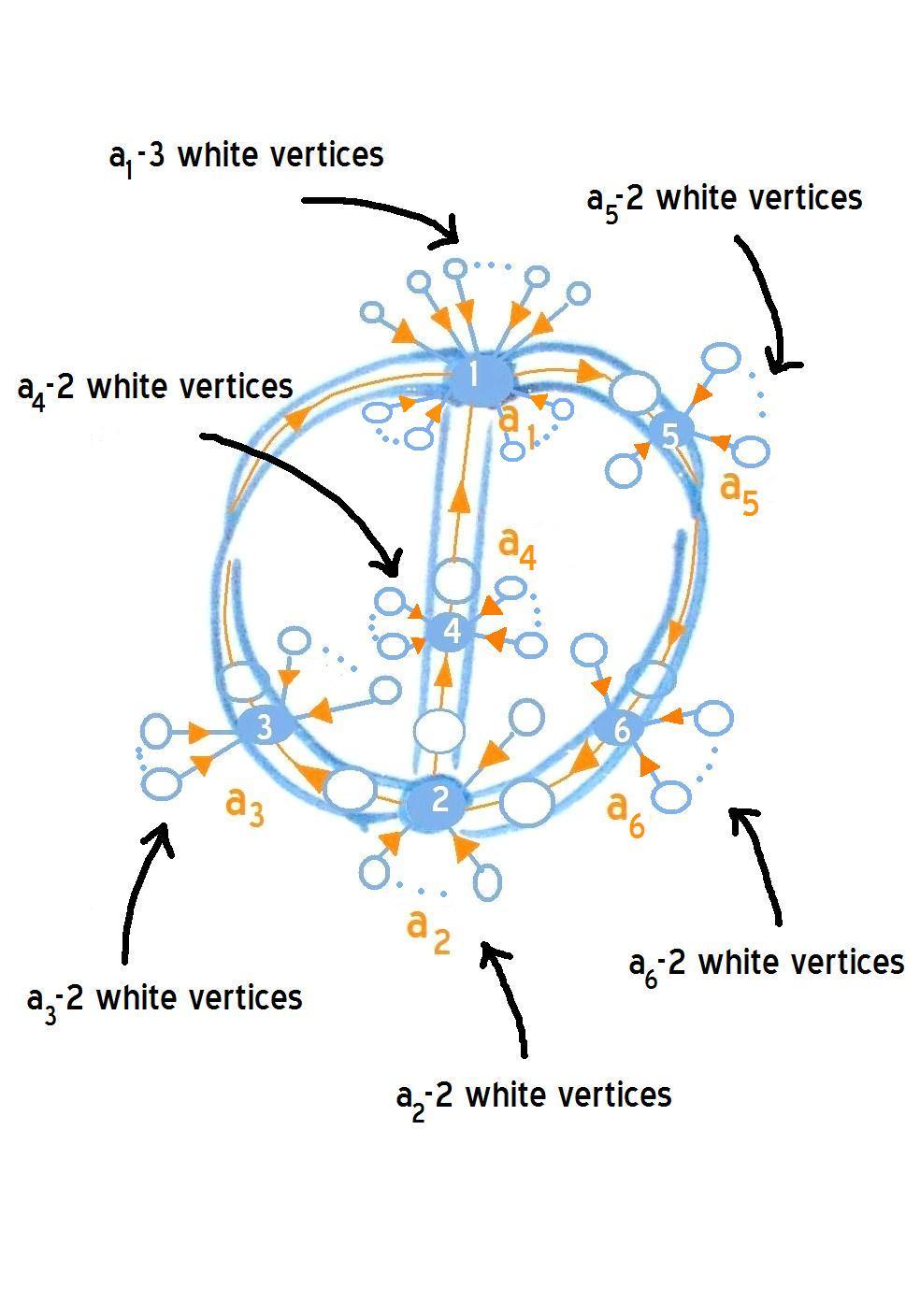}
\caption{This map has contribution to the coefficient of $R_{a_1}...R_{a_k}$ 
and is obtained from the reduced bipartite map g) from Figure 2. The white numbers show the enumeration of black vertices and the numbers $a_i$ is the coloring of the $i$-th black vertex.}
\end{figure}

\begin{remark}
Assume we have a map M without any vertices of order 1 and a coloring of the black vertices. Let $q_i$ be the color of the $i$-th black vertex of M.
If we add $w_i$ white vertices to the $i$-th  black vertex, then it will
change color from $q_i$ to $q_i + w_i$. 
\end{remark}

Thus if we want to build a map which has contribution to the coefficient of
$R_{a_1}...R_{a_k}$ out of the reduced map M with $b$ colored black vertices
and $m$ edges, we can do it using the following procedure:
\begin{enumerate}
\item[a)] Add $k-b$ black vertices of order 2 and give them color 2.
There are $\frac{(k+1)...(k+m-1)}{(m-1)!}$ ways to do it.
\item[b)] Enumerate black vertices in an arbitrary way and add  $a_i-q_i$ white vertices to the $i$-th black vertex. There are $\frac{(a_i-q_i+1)...(a_i-q_i+deg_i-1)}{(deg_i-1)!}$ ways to do that ($deg_i$ is the degree of the $i$-th black vertex).
\end{enumerate}  

It is possible to repeat the above-described procedure for other reduced maps to
get all maps having contribution to the coefficient of $R_{a_1}...R_{a_k}$. To do that we need to know the number of colorings $q$ of black vertices of other reduced bipartite maps. It is ilustrated by figure 6.
Note that it is not necessary to consider all the cases, as the number of colorings of black vertices depends only on the structure of the underlying graph, so it suffices to consider cases a), c) and f). 
Just like in the case~g), knowing the colorings $q$ of black vertices of other reduced bipartite maps, we are able to produce more complicated maps which have contribution to the coefficient of $R_{a_1}...R_{a_k}$.


\begin{figure}[htb]
\includegraphics[width=60mm]{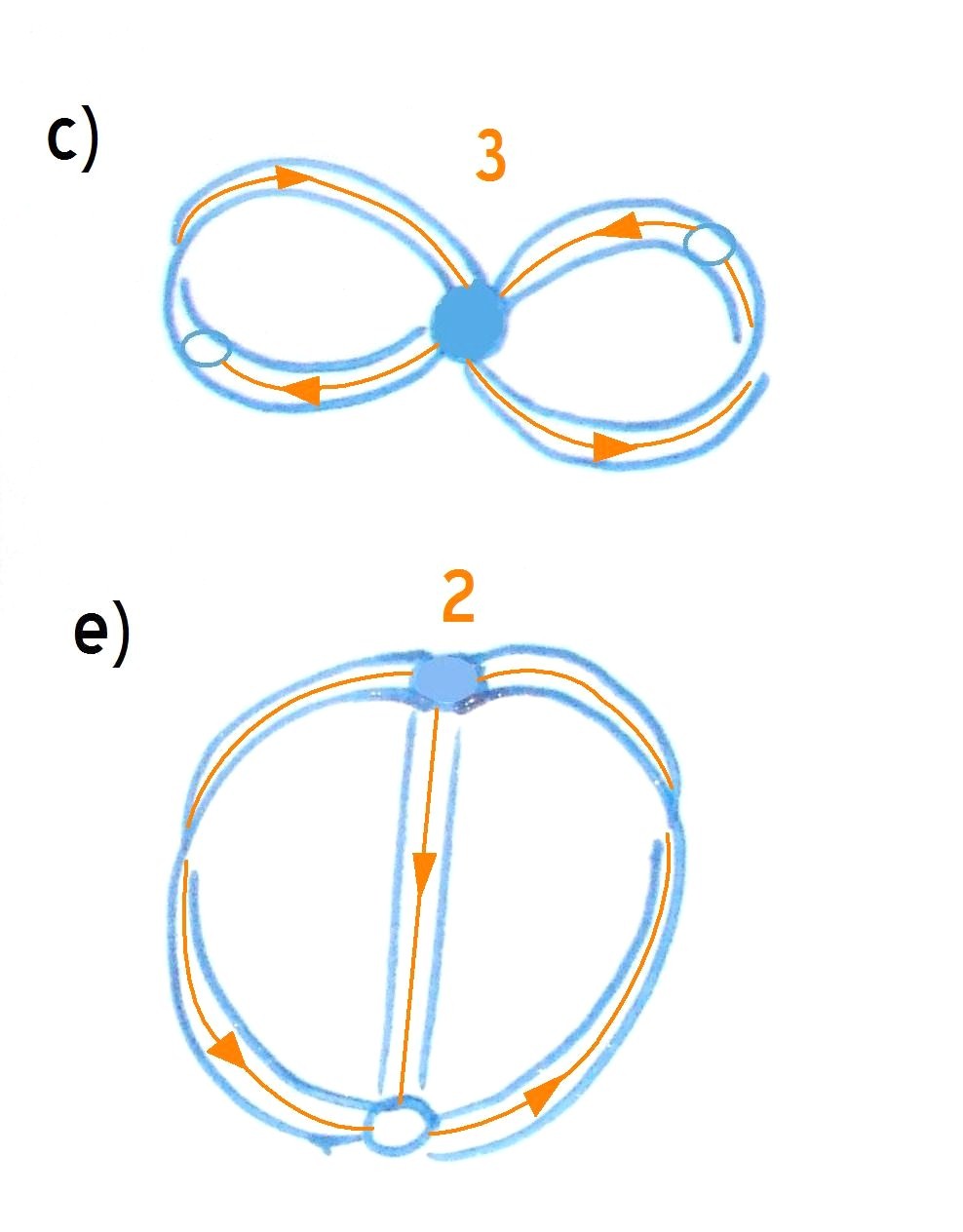}
\caption{Possible colorings of reduced bipartite maps from cases c) and d) from
Figure 2. Other reduced bipartite maps can be colored analogously.}
\end{figure}

\textbf{4. Labeling of the edges.} 
We are now able to find all maps in genus 1 up to the labeling of the edges.
What we need to know is the number of labeled maps.
We will first count the number of labelings of the edges of the reduced bipartite maps.
Let us take a look at the group of symmetries of 
the reduced bipartite maps, showing them as non-glued polygons with marked edges to be glued and one chosen way of labeling the edges.


Figure 7 shows reduced bipartite maps in two equivalent ways with description of
groups of symmetries of a given map and the number of elements of a stabilizator $Stab(M)$ of a given map $M$ under the action of the cyclic group $Z_n$ on the edges of the corresponding polygon.

\begin{figure}[htb]
\includegraphics[width=130mm]{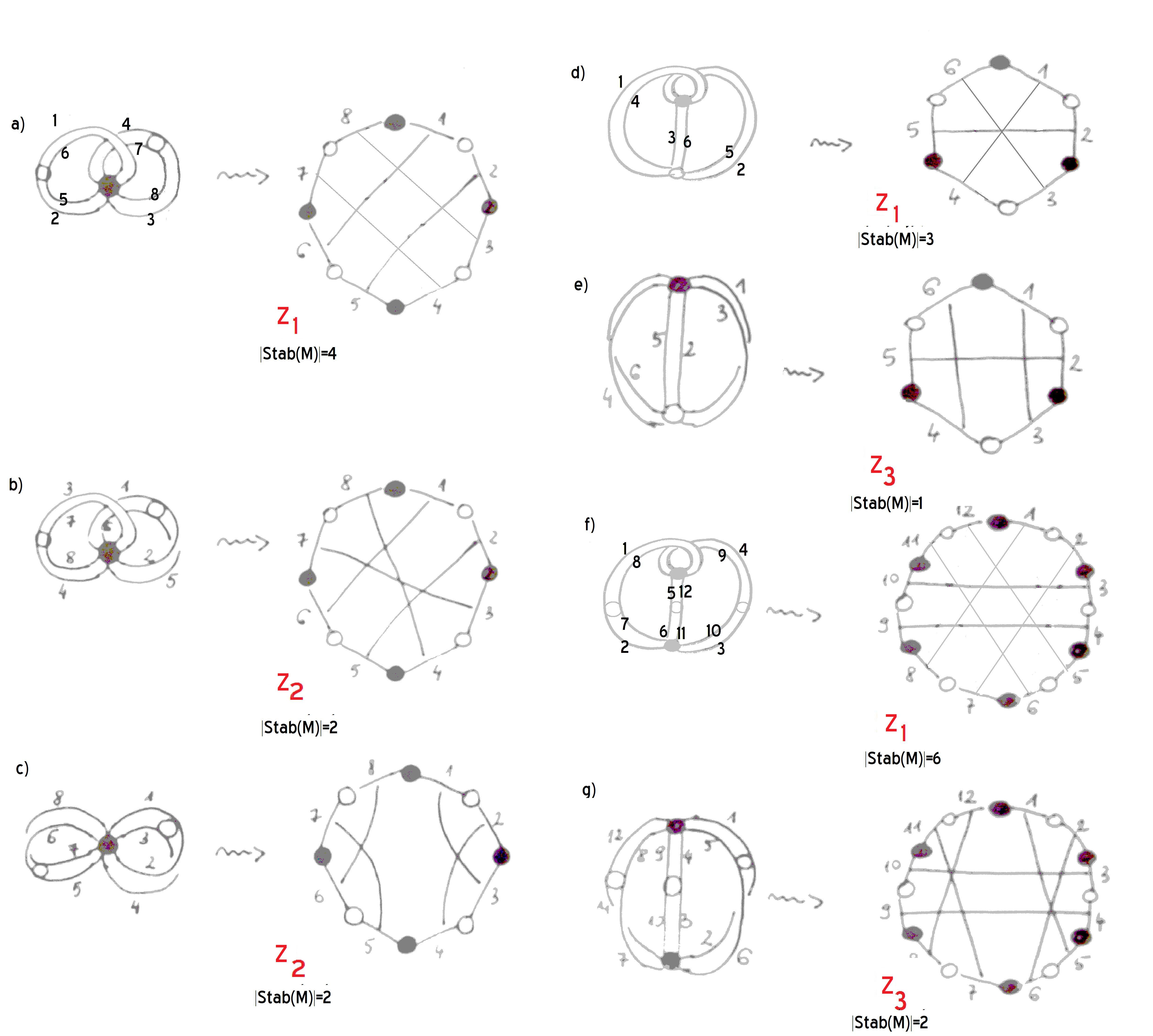}
\caption{Groups of symmetries of reduced bipartite maps.}
\end{figure}

We will now count the number of labeled maps obtained from one reduced map M.
We will use the following procedure:

\begin{enumerate}
\item[a)] Fix an arbitrary reduced map M and choose the labeling of its edges (or mark one edge).
\item[b)] Choose some edges to add vertices of order 2 and then add vertices of order 1 to black vertices in such a way that the resulting map have $b$ black and $w$ white vertices and $n$ edges. Begin labeling of the edges with the same edge we did with the reduced map.
\item[c)] Change the labels of the edges in all possible ways.
\end{enumerate} 
Let M have $k$ edges and let $R_M^{b,w}$ be the number of ways of adding to M vertices of order 1 and 2 in such a way that the resulting map has
 $b$ black and $w$ white vertices.
Note that the above-descried procedure can be performed in  $R_M^{b,w}\cdot n$ ways.


We need to figure out how many times each map is produced in this procedure.
Figure 8 shows an exaple of performing the procedure starting with the case c)
from Figure~2. In this case every map is obtained in two different ways.

\begin{figure}[htb]
\includegraphics[width=120mm]{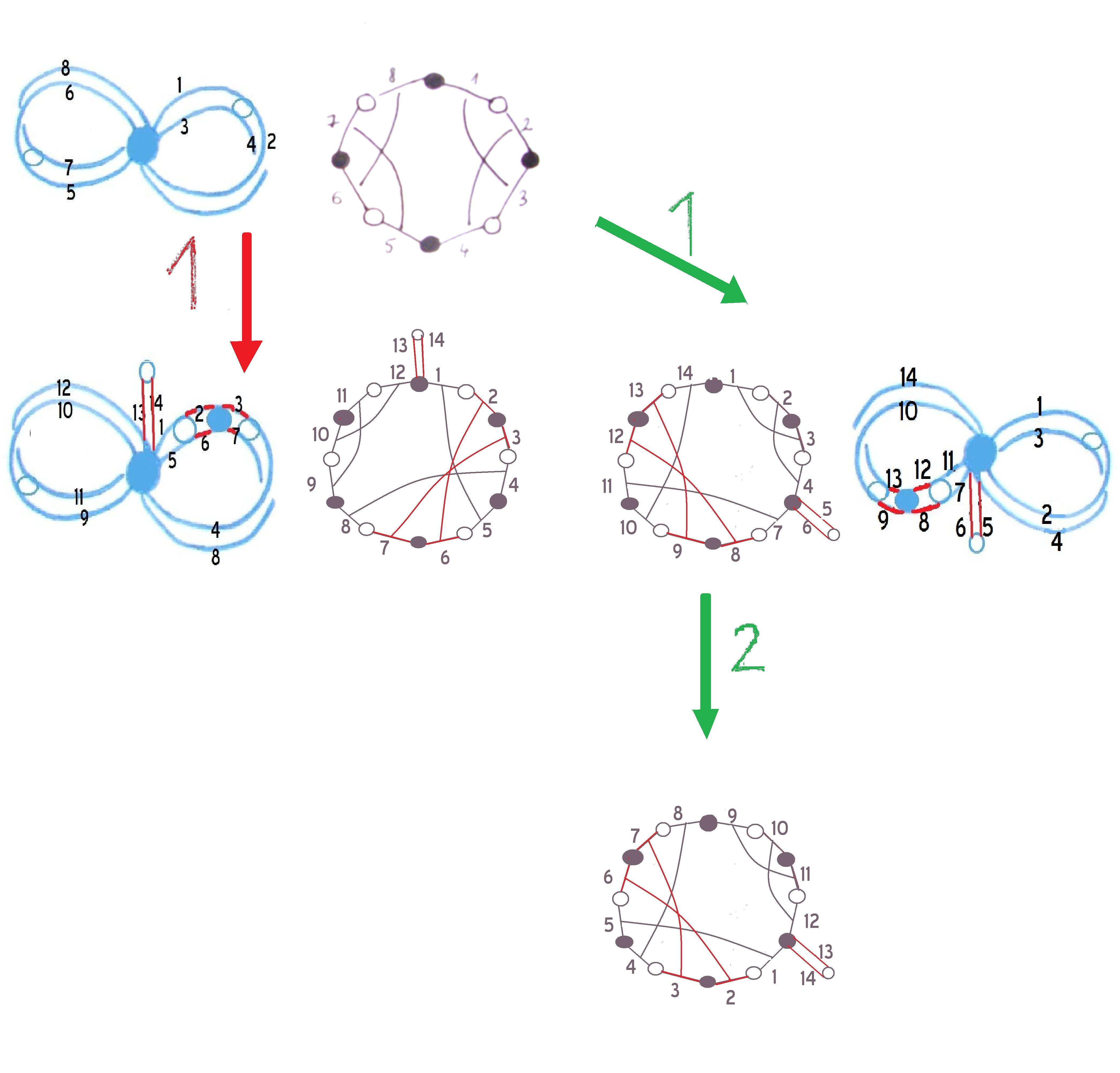}
\caption{Two ways of getting the same map from one reduced map.}
\end{figure}

In general the following is true:
\begin{remark}
The number of maps obtained from a given reduced map M in the above-described procedure is equal $R_M^{b,w}\cdot\frac{n}{Stab(M)}$.
\end{remark} 

\section{ The formula for $Z_{n,n-1}$} 
Now as we know the number of maps in genus 1 we can use Theorem 1 and
formulate the following lemma:
\begin{lemma} The part of the zonal Kerov Polynomial corresponding to the coefficients of genus one is given by the following formula:
\begin{multline*}
Z_{n,n-1}=\sum_k\sum_{{a_1,...,a_k}\atop{a_1+...+a_k=n-1}}\Bigl(
(k\frac{n}{4}+2k\frac{n}{2})\frac{(a_1-2)(a_1-1)a_1}{6}(a_2-1)(a_3-1)...(a_k-1)+\\
(\frac{(k+1)k}{2}\frac{n}{3}+\frac{(k+1)k}{2}n)\frac{(a_1-1)a_1}{2}(a_2-1)(a_3-1)...(a_k-1)+\\
(2\frac{k(k-1)}{2}\frac{n}{6}+2\frac{k(k-1)}{2}\frac{n}{2})\frac{(a_1-1)a_1}{2}\frac{(a_2-2)(a_2-1)}{2}(a_3-1)...(a_k-1)                                                    \Bigr)R_{a_1}...R_{a_k}
\end{multline*}
\end{lemma}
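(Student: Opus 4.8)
The plan is to assemble the formula by running the four-step construction (add order-2 black/white vertices, add order-1 white vertices, label the edges) over the reduced bipartite maps a)--g) of Figure 2, using Theorem 1 to identify each resulting labelled map as a contribution to the coefficient of $R_{a_1}\cdots R_{a_k}$. First I would organize the reduced maps by the isomorphism type of their underlying graph: as noted after Figure 6, the number of admissible colorings $q$ depends only on this underlying graph, so it suffices to treat the three shapes occurring in cases a), c), f) and then tally how many of the labelled reduced maps a)--g) realize each shape, together with the stabilizer orders $\mathrm{Stab}(M)$ read off from Figure 7. Each shape contributes one of the three summands in the displayed formula: the shape with a single black vertex of degree $3$ (the leading $(a_1-2)(a_1-1)a_1/6$ term), the shape with two black vertices giving the $(a_1-1)a_1/2$ term, and the shape with two black vertices each of degree $\geq 2$ giving the $\tfrac{(a_1-1)a_1}{2}\tfrac{(a_2-2)(a_2-1)}{2}$ term.

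Next, for a fixed shape with $b$ colored black vertices and $m$ edges, I would count the non-reduced maps producing $R_{a_1}\cdots R_{a_k}$ using the procedure spelled out in the excerpt: by Remark-on-binomials there are $\binom{k+m-1}{m-1}=\frac{(k+1)\cdots(k+m-1)}{(m-1)!}$ ways to insert the $k-b$ new order-2 black vertices along the $m$ edges, and $\frac{(a_i-q_i+1)\cdots(a_i-q_i+\deg_i-1)}{(\deg_i-1)!}$ ways to attach the order-1 white vertices raising the $i$-th black vertex from color $q_i$ to $a_i$; these factors specialize to the polynomial expressions $\tfrac{(a_1-2)(a_1-1)a_1}{6}$, $\tfrac{(a_1-1)a_1}{2}$, $(a_j-1)$ in the three cases (here I use $\deg=3$ or $\deg=2$ and $q\in\{2,3\}$ as dictated by Figure 4 and Figure 6). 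Summing over which black vertex plays the distinguished role, and over the two colorings in the two-vertex case, produces the symmetric polynomial $(a_2-1)(a_3-1)\cdots(a_k-1)$ tail in each term. Then, by the Remark preceding this lemma, the number of labelled maps obtained from a given reduced map $M$ is $R_M^{b,w}\cdot\frac{n}{\mathrm{Stab}(M)}$; multiplying the combinatorial count by $\frac{n}{\mathrm{Stab}(M)}$ and summing over the reduced maps of each shape yields exactly the numerical prefactors $k\tfrac n4+2k\tfrac n2$, $\tfrac{(k+1)k}{2}\tfrac n3+\tfrac{(k+1)k}{2}n$, $2\tfrac{k(k-1)}{2}\tfrac n6+2\tfrac{k(k-1)}{2}\tfrac n2$ (the two summands in each bracket come from the two orientation/closed-walk choices, equivalently the two Figure-4-type colorings, or from the reflection vs.\ rotation contributions to $\mathrm{Stab}(M)$).

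Finally, I would invoke Theorem 1 directly: the rescaling factor $(-1)^{n+1+2s_2+3s_3+\cdots}2^{\,n-1-(2s_2+3s_3+\cdots)}$ is, by the Remark after Theorem 1, identically $(-1)^{n+1}\cdot 2^0=\pm1$ in genus one since $2s_2+3s_3+\cdots=n-1$ there; in particular there is no surviving power of $2$ or sign ambiguity beyond the overall one absorbed into the definition of $Z_{n,n-1}$, so the count of pairs $(M,q)$ equals the coefficient of $R_{a_1}\cdots R_{a_k}$ outright. Verifying that the admissibility condition (d) (equivalently Theorem 2) is satisfied for every map in the construction, and for no map outside the list a)--g), is handled by the Remark after Figure 2 and the running commentary; I would restate this as a lemma that the enumerated maps are exactly the genus-one contributors.

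The main obstacle I anticipate is bookkeeping rather than conceptual: correctly enumerating the labelled reduced maps of each underlying-graph shape among a)--g), pinning down each $\mathrm{Stab}(M)$ from Figure 7, and checking that no labelled non-reduced map is double-counted except by the factor $\mathrm{Stab}(M)$ already accounted for (the Figure 8 phenomenon). A secondary subtlety is making sure the two-black-vertex shapes are split correctly between the second and third summands according to whether the second vertex has forced degree $2$ (hence color exactly $2$, contributing a factor $(a_2-1)$) or genuine degree $\geq 2$ with color possibly $3$ (contributing $\tfrac{(a_2-2)(a_2-1)}{2}$), since this is exactly where the combinatorics of the two colorings in Figure 4 interacts with the degree data.
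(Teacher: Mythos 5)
Your proposal follows essentially the same approach as the paper's (very brief) proof: each summand corresponds to a reduced bipartite map from Figure 2, with the factors coming from the number of colorings $q$, the $\binom{k+m-1}{m-1}$-type placement of new order-2 black vertices, the attachment of order-1 white vertices via $\frac{(a_i-q_i+1)\cdots(a_i-q_i+\deg_i-1)}{(\deg_i-1)!}$, and the labeling factor $\frac{n}{\mathrm{Stab}(M)}$, with the Theorem 1 rescaling trivial in genus one. The only slips are cosmetic and would be fixed in execution: the cubic factor $\frac{(a_1-2)(a_1-1)a_1}{6}$ comes from a black vertex of degree $4$ with initial color $3$ (not degree $3$), and the two summands inside each bracket arise from distinct reduced bipartite maps with different stabilizer orders (the number of colorings, e.g.\ the $2$ in the third bracket, is a separate factor), which matches your earlier, correct plan of tallying the maps a)--g) by shape and stabilizer.
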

\begin{proof}[Proof]
Every summand in the equation corresponds to one reduced bipartite map.
Let us take a look at one of the summands:

\begin{multline*}
\underbrace{2}_{\text{2\ colorings}}\underbrace{\frac{k(k-1)}{2}}_{\text{putting k-2 black vert. on 3 edges}}
\frac{n}{6}
\underbrace{\frac{(a_1-1)a_1}{2}}_{\text{putting $a_1-2$ white vert. to the black vert. of deg. 3}}\times\\
\times\underbrace{\frac{(a_2-2)(a_2-1)}{2}}_{\text{putting $a_1-3$ white vert. to the other black vert. of deg. 3}}
\underbrace{(a_3-1)...(a_k-1)}_{\text{putting white vert. to the black vert. of deg. 2} }
\end{multline*}
Other summands are obtained in the same way.
\end{proof}
Let us try to show this equation in a more nice form, making it possible to check the Lassalle conjecture in genus 1. 

\begin{theorem}
\begin{multline*}
Z_{n,n-1}=\sum_k\sum_{{a_1,...,a_k}\atop{a_1+...+a_k=n-1}}n\Bigl(\frac{5}{24}\sum_{i=1}^{k} {a_i}^2
+\frac{1}{4}\sum_{i=1}^{k} a_i+
\frac{1}{6}\sum_{{i,j=1}\atop{i\neq j}}^{k} a_ia_j\Bigr)\prod_{i=1}^{i=k}(a_i-1)R_{a_1}\cdots R_{a_k}
\end{multline*}
\end{theorem}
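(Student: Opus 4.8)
The plan is to show that the compact formula in the Theorem is nothing but an algebraic simplification of the expression in the preceding Lemma. So the whole argument is a symmetrization-and-collect-terms computation, and the work is to organize it cleanly rather than to introduce any new combinatorics. First I would start from the Lemma's expression for $Z_{n,n-1}$ and observe that the inner sum runs over \emph{ordered} tuples $(a_1,\dots,a_k)$ with $a_1+\cdots+a_k=n-1$, so every monomial $R_{a_1}\cdots R_{a_k}$ is counted with all its orderings; this lets me replace the asymmetric coefficients coming from the three reduced-map families (the ones distinguishing a ``first'' vertex of degree $3$, or a ``second'' one, etc.) by their averages over the $k!$ permutations of the indices. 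Concretely, a term like $\frac{(a_1-2)(a_1-1)a_1}{6}\prod_{i\ge 2}(a_i-1)$ becomes, after symmetrization, $\frac{1}{k}\sum_{i=1}^k\frac{(a_i-2)(a_i-1)a_i}{6(a_i-1)}\prod_{j}(a_j-1)=\frac{1}{k}\sum_i\frac{(a_i-2)a_i}{6}\prod_j(a_j-1)$, and similarly for the other two families; the explicit $k$-dependent prefactors $k$, $\binom{k+1}{2}$, $2\binom{k}{2}$ in the Lemma are exactly what is needed to cancel the $1/k$, $1/\binom{k+1}{2}$, $\ldots$ coming from symmetrization, leaving clean sums over $i$ (and over ordered pairs $i\ne j$) all multiplied by the common factor $\prod_{i=1}^k(a_i-1)$.

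Next I would pull out the common factor $n\prod_{i=1}^k(a_i-1)$ and reduce the problem to a polynomial identity: the three symmetrized families must combine to $\frac{5}{24}\sum_i a_i^2+\frac14\sum_i a_i+\frac16\sum_{i\ne j}a_i a_j$. To see this I would first deal with the ``genus-one'' prefactors: each parenthesized coefficient in the Lemma has the shape (something)$\cdot\frac{n}{c_1}$ + (something)$\cdot\frac{n}{c_2}\cdot$, so after factoring $n$ the two contributions of each family add up as, e.g., $\frac14+\frac12=\frac34$ for the first family, $\frac13+1=\frac43$ for the second, $\frac16+\frac12=\frac23$ for the third (up to the combinatorial prefactors); I would record these rational constants and carry them along. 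Then, using $\frac{(a_i-2)(a_i-1)a_i}{6}=(a_i-1)\cdot\frac{a_i^2-2a_i}{6}$, $\frac{(a_i-1)a_i}{2}=(a_i-1)\cdot\frac{a_i}{2}$, and $\frac{(a_i-1)a_i}{2}\cdot\frac{(a_j-2)(a_j-1)}{2}=(a_i-1)(a_j-1)\cdot\frac{a_i(a_j-2)}{4}$, every family contributes, after dividing out $\prod_i(a_i-1)$, a sum of monomials in the $a_i$ of total degree at most $2$. Collecting the coefficients of $\sum_i a_i^2$, of $\sum_i a_i$, and of $\sum_{i\ne j}a_i a_j$ across the three families should then produce $\frac{5}{24}$, $\frac14$, $\frac16$ respectively; I would present this as a short table of the contributions of each family to each of the three symmetric functions.

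The main obstacle, such as it is, is purely bookkeeping: one must be careful that the symmetrization of the mixed family (the one with a factor $\frac{(a_2-2)(a_2-1)}{2}$ attached to a \emph{different} index than the factor $\frac{(a_1-1)a_1}{2}$) genuinely produces a sum over \emph{ordered} pairs $i\ne j$ and not over unordered pairs, and that the factor $2$ in $2\binom{k}{2}$ in the Lemma is precisely what accounts for this (so that no stray factor of $2$ survives in front of $\sum_{i\ne j}a_i a_j$); likewise one must check that the ``diagonal'' terms $i=j$, which symmetrization of that family could spuriously create, do not arise because the two degree-$3$ vertices of that reduced map are genuinely distinct. A secondary point to verify is that the term $\frac{a_i^2-2a_i}{6}$ from the first family, the $\frac{a_i}{2}$ from the second, and the diagonal-free $a_ia_j$ and the leftover linear pieces from the third all have the stated rational coefficients after weighting by the genus-one prefactors $\frac34,\frac43,\frac23$ (and their combinatorial companions) — this is the one spot where an arithmetic slip would be easy, so I would double-check it by evaluating both the Lemma's formula and the Theorem's formula at a small case, say $n=2$ (where $k=1$, $a_1=1$, and both sides must vanish since $a_1-1=0$) and $n=3$ (where $k=1,a_1=2$ gives the coefficient of $R_2$, and $k=2,(a_1,a_2)=(1,1)$ gives the coefficient of $R_1^2$), to confirm the two expressions agree.
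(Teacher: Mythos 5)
Your proposal follows the paper's own route: the paper likewise derives the Theorem from the preceding Lemma by using that the inner sum runs over ordered tuples, replacing $ka_1$, $ka_2$ by $\sum_i a_i$ and $k(k-1)a_1a_2$ by $\sum_{i\neq j}a_ia_j$, and then simplifying algebraically, so your symmetrization-and-collect-terms plan is essentially the same argument written out in more detail. Two small points your proposed cross-check would catch: the first family's prefactor is $\frac14+2\cdot\frac12=\frac54$ (so that $\frac54\cdot\frac16=\frac{5}{24}$), not $\frac34$, and the $k$-dependent prefactors do not cancel the symmetrization factors completely within each family --- the leftover linear pieces $\frac{k+1}{3}\sum_i a_i$ (second family) and $-\frac{k-1}{3}\sum_i a_i$ (third family) must be combined with $-\frac{5}{12}\sum_i a_i$ (first family) before the $k$-independent coefficient $\frac14$ emerges.
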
 
\begin{proof}[Proof]
Note that in the equation from Lemma 1, instead of  $a_1$ and $a_2$ we can
put arbitrary $a_i$ and the equation will still be true.
Let us then put there  $\sum_{i=1}^{k} a_i$ instead of $ka_1$ and $ka_2$ and then  $\sum_{{i,j=1}\atop{i\neq j}}^{k} a_ia_j$ instead of $k(k-1)a_1a_2$.
The final equation is obtained by simple algebraic operations.
\end{proof}

Now our aim is to read the equation for the coefficient of $R_{a_1}\cdots R_{a_k}$ out of the Theorem 7. Let us see how many times the product $R_{a_1}\cdots R_{a_k}$ appears in our formula. Note that it depends on how many times the numbers $a_i$ appear in the sequence $(a_1,...,a_k)$.
Let  $\mu=(\underbrace{a_1,...,a_1}_{\text{$l_1$ times}},\underbrace{a_2,...,a_2}_{\text{$l_2$ times}},...,\underbrace{a_j,...,a_j}_{\text{$l_j$ times}} )$ for $2\leq a_1 \textless a_2\textless\cdots\textless a_j\in\mathbb{N}$.
Denote by $R_{\mu}$ the product $\underbrace{R_{a_1}\cdots R_{a_1}}_{\text{$l_1$ times}}
\underbrace{R_{a_2}\cdots R_{a_2}}_{\text{$l_2$ times}}\cdots\underbrace{R_{a_j}\cdots R_{a_j}}_{\text{$l_j$ times}}$.
It is easy to see that  $R_{\mu}$ appears in the formula of Theorem 7
$\frac{l(\mu)!}{l_1!\cdot...\cdot l_j!}$ times and the remaining part of the formula does not depend on the order of $a_i$ in the sequence $(a_1,...,a_k)$. We can thus state the following:


\begin{corollary}
Let $\mu=(\underbrace{a_1,...,a_1}_{\text{$l_1$ times}},
\underbrace{a_2,...,a_2}_{\text{$l_2$ times}},...,
\underbrace{a_j,...,a_j}_{\text{$l_j$ times}} )$. Then
\begin{multline*}
Z_{n,n-1}=\sum_k \sum_{|\mu|=n-1}n\frac{l(\mu)!}{l_1!\cdots l_j!}\Bigl(\frac{5}{24}\sum_{i=1}^{k} {a_i}^2
+\frac{1}{4}\sum_{i=1}^{k} a_i+
\frac{1}{6}\sum_{{i,j=1}\atop{i\neq j}}^{k} a_ia_j\Bigr)\prod_{i=1}^{i=k}(a_i-1)R_{\mu}.
\end{multline*}
 
\end{corollary}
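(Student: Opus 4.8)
The plan is to derive the Corollary directly from Theorem 7 by bookkeeping how many ordered sequences $(a_1,\dots,a_k)$ give rise to the same multiset of indices, i.e.\ to the same product $R_\mu$. First I would observe that in the formula of Theorem 7 the summation variable is an \emph{ordered} tuple $(a_1,\dots,a_k)$ of integers $\geq 2$ with $a_1+\cdots+a_k=n-1$, and that the entire coefficient
$$
n\Bigl(\tfrac{5}{24}\sum_{i=1}^{k} a_i^2+\tfrac14\sum_{i=1}^{k} a_i+\tfrac16\sum_{\substack{i,j=1\\ i\neq j}}^{k} a_ia_j\Bigr)\prod_{i=1}^{k}(a_i-1)
$$
is a symmetric function of $a_1,\dots,a_k$; hence it depends only on the underlying partition $\mu$, not on the order in which the parts are listed. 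This is the only nontrivial point, and it is immediate from inspection: each of the three sums and the product is manifestly invariant under permutations of the $a_i$.

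Next I would carry out the counting step. Given a partition $\mu=(\underbrace{a_1,\dots,a_1}_{l_1},\dots,\underbrace{a_j,\dots,a_j}_{l_j})$ with distinct parts $a_1<\cdots<a_j$ and $k=l(\mu)=l_1+\cdots+l_j$, the number of distinct ordered tuples obtained by reordering the parts of $\mu$ — equivalently, the number of ordered tuples $(a_1,\dots,a_k)$ in the sum of Theorem 7 whose associated product is exactly $R_\mu$ — is the multinomial coefficient $\binom{k}{l_1,\dots,l_j}=\frac{l(\mu)!}{l_1!\cdots l_j!}$. Since for each such tuple the symmetric coefficient above takes the same value (computed on $\mu$), collecting all these equal terms replaces the sum over ordered tuples with a sum over partitions $|\mu|=n-1$, each term multiplied by $\frac{l(\mu)!}{l_1!\cdots l_j!}$. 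I would also note that the outer sum over $k$ is retained verbatim (with $k$ understood as $l(\mu)$), matching the statement; in the sums $\sum_{i=1}^k a_i^2$ etc.\ appearing inside, the $a_i$ now run over the parts of $\mu$ listed with multiplicity.

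Finally I would assemble these two observations into the displayed identity of the Corollary, noting that no further algebraic manipulation is needed — the formula of Theorem 7 is simply re-indexed. The main (and only) obstacle is the symmetry claim, and since every building block of the coefficient is a symmetric polynomial in $a_1,\dots,a_k$, there is really nothing to prove beyond pointing this out; the rest is the standard passage from a sum over compositions to a sum over partitions weighted by the number of rearrangements. I would keep the proof to two or three sentences, exactly in the spirit of the discussion preceding the Corollary in the text.
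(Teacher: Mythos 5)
Your proposal is correct and follows essentially the same route as the paper: the text preceding the Corollary argues exactly that the coefficient in Theorem 7 is independent of the order of the $a_i$, so that each $R_\mu$ is collected $\frac{l(\mu)!}{l_1!\cdots l_j!}$ times when passing from ordered tuples to partitions. Your write-up simply makes this re-indexing argument slightly more explicit than the paper does.
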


\[
\text{REFERENCES}
\]
\begin{enumerate}

\item[[DFŚ10]] Maciej Dołęga, Valentin Féray, and Piotr Śniady. Explicit combinatorial interpretation
of Kerov character polynomials as numbers of permutation factorizations.
Adv. Math., 2010. doi:10.1016/j.aim.2010.02.011.

\item[[FŚ11a]] V. Féray and P. Śniady. Asymptotics of characters of symmetric groups related
to Stanley character formula. Ann. Math., 173(2):887–906, 2011.

\item[[FŚ11b]] V. Féray and P. Śniady. Zonal polynomials via Stanley’s coordinates and free cumulants. J. Alg., 334(1):338–373, 2011.

\item[[Hua63]] L. K. Hua. Harmonic analysis of functions of several complex variables in the classical domains. Translated from the Russian by Leo Ebner and Adam
Korányi. American Mathematical Society, Providence, R.I., 1963.

\item[[Mui82]] Robb J. Muirhead. Aspects of multivariate statistical theory. John Wiley \& Sons Inc., New York, 1982.Wiley Series in Probability and Mathematical Statistics.

\end{enumerate}
\end{document}